\newtheorem{theorem}{Theorem}
\newtheorem{lemma}{Lemma}
\begin{document}

\baselineskip=17pt

\title{\bf On an equation with prime numbers close to squares}

\author{\bf S. I. Dimitrov}
\date{2019}
\maketitle
\begin{abstract}
Let $[\, \cdot\,]$ be the floor function.
In this paper, we show that when $1<c<37/36$, then
every sufficiently large positive integer $N$ can be represented in the form
\begin{equation*}
N=[p^c_1]+[p^c_2]+[p^c_3]\,,
\end{equation*}
where $p_1,p_2,p_3$ are primes close to squares.\\
\quad\\
\textbf{Keywords}: Diophantine equation $\cdot$ prime $\cdot$ exponential sum\\
\quad\\
{\bf  2010 Math.\ Subject Classification}:  11P32 $\cdot$ 11P55
\end{abstract}

\section{Introduction and main result}
\indent

In 1995 Laporta and Tolev  \cite{Laporta-Tolev} considered the diophantine equation
\begin{equation*}
[p^c_1]+[p^c_2]+[p^c_3]=N\,,
\end{equation*}
where $p_1,\, p_2,\, p_3$ are primes.
For $1<c <17/16$ they proved that for the sum
\begin{equation*}
R(N)=\sum\limits_{[p^c_1]+[p^c_2]+[p^c_3]=N}\log p_1\log p_2\log p_3
\end{equation*}
the asymptotic formula
\begin{equation}\label{RNasymptoticformula}
R(N)=\frac{\Gamma^3(1 + 1/c)}{\Gamma(3/c)}N^{3/c-1}
+\mathcal{O}\Big(N^{3/c-1}\exp\big(-(\log N)^{1/3-\varepsilon}\big)\Big)
\end{equation}
holds.

Later the result of Laporta and Tolev was improved by Kumchev and Nedeva \cite{Kumchev-Nedeva}
to $1<c <12/11$, by Zhai and Cao  \cite{Zhai-Cao} to $1<c<258/235$ and finally
by Cai \cite{Cai} to $1<c <137/119$ and this is the best result up to now.

In 1997 Kumchev and Tolev \cite{Kumchev-Tolev}  proved
by asymptotic formula  that when $N_1,\ldots,N_n$
are sufficiently large positive integers then the system
\begin{equation}\label{Kumchev-Tolev-system}
\left|\begin{array}{cccc}
p_1+p_2+\cdots+p_k=N_1\\
p^2_1+p^2_2+\cdots+p^2_k=N_2\\
\cdots\cdots\cdots\cdots\cdots\cdots\cdots\\
p^n_1+p^n_2+\cdots+p^n_k=N_n
\end{array}\right.
\end{equation}
is solvable in primes $p_1,\ldots,p_k$ near to squares,
i.e. such that $\sqrt{p_1},\ldots,\sqrt{p_k}$  are close to integers.
In the system  \eqref{Kumchev-Tolev-system}, $n \geq2$ and $k\geq k_0(n)$, where $k_0(n)$ is defined by the table
\begin{center}
  \begin{tabular}{ | l | c | r | l | c | r | l | c | r | l | c | }
    \hline
    n & 2 & 3 &  4 & 5  &  6 &  7 &  8 &  9 & 10  \\ \hline
$k_0(n )$ & 7 & 19 & 49 & 113 & 243 & 413 & 675 & 1083 & 1773  \\ \hline
  \end{tabular}
\end{center}
in the case of $2\leq n \leq10$, and by the formula
\begin{equation*}
k_0(n)=2[n^2(3 \log n + \log \log n + 4)] - 21
\end{equation*}
in the case of $n \geq11$.

Recently the author \cite{Dimitrov} showed
that  for any fixed $1<c<35/34$,  every sufficiently large real number $N$
and a small constant $\varepsilon>0$, the diophantine inequality
\begin{equation*}
|p_1^c+p_2^c+p_3^c-N|<\varepsilon
\end{equation*}
has a solution in primes $p_1,\,p_2,\,p_3$ near to squares.

Motivated by these results and using the method in  \cite{Dimitrov}
in this paper we shall prove the following theorem.
\begin{theorem} Let $c$ be fixed with $1<c<37/36$
and $\delta>0$ be a fixed sufficiently small number.
Then for every sufficiently large positive integer $N$ the diophantine equation
\begin{equation*}
[p^c_1]+[p^c_2]+[p^c_3]=N,
\end{equation*}
is solvable in prime numbers $p_1,p_2,p_3$ such that
\begin{equation*}
\|\sqrt{p_1}\|,\; \|\sqrt{p_2}\|,\; \|\sqrt{p_3}\|<N^{-\frac{6}{17c}\big(\frac{37}{36}-c\big)+\delta}
\end{equation*}
(as usual, $\|\alpha\|$ denotes the distance from $\alpha$ to the nearest integer).
\end{theorem}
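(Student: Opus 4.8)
The plan is to combine the circle method, in the hybrid form of Laporta and Tolev, with a Fourier device for detecting primes near squares. Set $X=N^{1/c}$ and $\eta=N^{-\frac{6}{17c}(37/36-c)+\delta}$, and introduce the generating sum
$$S(\alpha)=\sum_{\substack{p\le X\\ \|\sqrt p\|<\eta}}\log p\cdot e\big(\alpha[p^c]\big).$$
Since $[p_1^c]+[p_2^c]+[p_3^c]-N$ is an integer, orthogonality of $e(\alpha k)$ over $[0,1]$ gives
$$R(N):=\sum_{\substack{[p_1^c]+[p_2^c]+[p_3^c]=N\\ \|\sqrt{p_i}\|<\eta}}\log p_1\log p_2\log p_3=\int_0^1 S^3(\alpha)\,e(-\alpha N)\,d\alpha,$$
and it suffices to show $R(N)>0$ for large $N$. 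I would split $[0,1]$ into a single major arc $\mathfrak M=\{\alpha:\|\alpha\|\le\tau\}$ about the origin, with $\tau$ slightly larger than $N^{-1}$, and its complement, the minor arc $\mathfrak m$. The main term comes only from $\mathfrak M$: for $c>1$ the sequence $[p^c]$ decorrelates from residue classes, so every rational $a/q$ with $q\ge2$ lies in $\mathfrak m$ and contributes no secondary main term, consistent with the absence of a singular series in \eqref{RNasymptoticformula}.

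On $\mathfrak M$ I would extract the main term as follows. Writing $[p^c]=p^c-\tfrac12-\psi(p^c)$ with $\psi(t)=\{t\}-\tfrac12$, the twist $e(-\alpha\psi(p^c))=1+O(|\alpha|)$ is negligible for $|\alpha|\le\tau$, so $S(\alpha)$ is closely approximated by $e(-\alpha/2)\sum_{p\le X,\,\|\sqrt p\|<\eta}\log p\cdot e(\alpha p^c)$. To treat the square-root condition I would insert Vaaler's trigonometric approximation to the indicator of $\|\cdot\|<\eta$, which yields a main density $2\eta$ together with a tail $\sum_{1\le|h|\le H}a_h\,e(h\sqrt p)$ with $|a_h|\ll\min(\eta,1/|h|)$. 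On the major arc the density-$2\eta$ piece, after partial summation, the prime number theorem, and the equidistribution of $\sqrt p$ modulo one (which kills the tail), reproduces $(2\eta)^3$ times the Laporta--Tolev integral of \eqref{RNasymptoticformula}, giving a main term of exact order $\eta^3N^{3/c-1}$, which is strictly positive.

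The decisive estimate is on the minor arc, where I would bound
$$\int_{\mathfrak m}|S(\alpha)|^3\,d\alpha\le\Big(\sup_{\alpha\in\mathfrak m}|S(\alpha)|\Big)\int_0^1|S(\alpha)|^2\,d\alpha.$$
The mean square is the number of weighted pairs with $[p_1^c]=[p_2^c]$ and both primes near squares; the diagonal contributes $\ll\eta N^{1/c}\log N$ and the off-diagonal $\ll\eta^2 N^{(2-c)/c+\delta}$, so the square-root condition saves the expected powers of $\eta$. The hard part will be the pointwise bound for $S(\alpha)$ on $\mathfrak m$: after the Vaaler expansion this reduces to nontrivial cancellation in the mixed sums $\sum_{p\le X}\log p\cdot e(\alpha[p^c]+h\sqrt p)$. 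I would strip the von Mangoldt weight by Vaughan's identity, reduce to type I and type II sums, and estimate the smooth exponential sums that remain by van der Corput's method and exponent pairs applied to the combined phase $\alpha t^c+h\sqrt t$, splitting the ranges of $p$ and $h$ dyadically. Balancing the sizes of the two terms of this phase against the Vaaler length $H$ and the type I/II ranges is precisely what forces $c<37/36$ and determines the admissible $\eta=N^{-\frac{6}{17c}(37/36-c)+\delta}$; this joint optimization, rather than any isolated estimate, is the central obstacle. Once $\sup_{\mathfrak m}|S|$ beats $\eta^3N^{3/c-1}$ divided by the mean square, the minor arc is $o(\eta^3N^{3/c-1})$, so $R(N)>0$ and the theorem follows.
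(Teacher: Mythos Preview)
Your outline is workable but takes a genuinely different route from the paper.  The paper does \emph{not} perform a major/minor arc decomposition.  Instead it introduces a smooth minorant $\chi$ for the indicator of $\|t\|<Y$ with Fourier expansion $\chi(t)=\tfrac{9}{5}Y+\sum_{m\neq0}g(m)e(mt)$ and writes the generating function as $H(\alpha)=\tfrac{9}{5}Y\,S_0(\alpha)+V(\alpha)$, where $S_0(\alpha)=\sum_{p\le X}e(\alpha[p^c])\log p$ is the \emph{unrestricted} sum and $V(\alpha)$ collects the $m\neq0$ modes.  Cubing and integrating over the whole of $[0,1]$, the $(\tfrac{9}{5}Y)^3$ piece is literally the Laporta--Tolev quantity on the left of \eqref{RNasymptoticformula}, which is then quoted as a black box to give the main term $\gg Y^3X^{3-c}$.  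All cross terms are controlled by $\max_{0\le\alpha\le1}|V(\alpha)|$ times the mean squares $\int_0^1|S_0|^2$ and $\int_0^1|V|^2$; the crucial point is that this maximum is taken over the \emph{entire} unit interval, which is possible because every term of $V$ carries a factor $e(m\sqrt p)$ with $m\neq0$, supplying oscillation even at $\alpha=0$.  Thus the whole argument reduces to a single uniform exponential-sum bound (the paper's Lemma~\ref{ValphaUpperbound}), with no separate major-arc computation at all.

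Your scheme, by contrast, bakes the near-square constraint into $S(\alpha)$ from the outset, which forces an arc split because your $S$ is genuinely large near $\alpha=0$.  Consequently you must redo the major-arc analysis rather than invoke \eqref{RNasymptoticformula} directly; and on the minor arc your Vaaler expansion produces not only the mixed sums with $h\neq0$ but also the density piece $2\eta\,S_0(\alpha)$, which still requires the unrestricted minor-arc bound of Laporta--Tolev/Cai---a step you do not mention.  The paper's organization is therefore considerably cleaner, though the exponential-sum input that actually drives the exponent $37/36$---estimating $\sum_{n\le X}\Lambda(n)e(\beta n^c+m\sqrt n)$ via Vaughan's identity, type I/II sums, and van der Corput/exponent pairs, together with Buriev's expansion to handle the fractional part $\{n^c\}$---is essentially the same in both approaches.
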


\section{Notations}
\indent

Let $N$ be a sufficiently large positive integer and $X=N^{1/c}$.
By $\varepsilon$ we denote an arbitrary small positive number, not the same in all appearances.
The letter $p$  with or without subscript will always denote prime number.
By $\delta$ we denote an fixed sufficiently small positive number.
We denote by $\Lambda(n)$ von Mangoldt's function.
Moreover $e(y)=e^{2\pi \imath y}$.
As usual $[t]$ and $\{t\}$ denote the integer part, respectively, the
fractional part of $t$.
We recall that $t=[t]+\{t\}$ and $\|t\|=\min(\{t\}_,1-\{t\})$.
Let $c$ be fixed with $1<c<37/36$.

Denote
\begin{align}
\label{r}
&r=[\log X]\,;\\
\label{Y}
&Y=X^{-\frac{6}{17}\big(\frac{37}{36}-c\big)+\delta}\,;\\
\label{Delta}
&\Delta=Y/5\,;\\
\label{M}
&M=\Delta^{-1}r.
\end{align}

\section{Preliminary lemmas}
\indent

\begin{lemma}\label{Periodicfunction} Let $r\in \mathbb{N}$.
There exists a function $\chi(t)$ which is $r$-times continuously differentiable and
1-periodic with a Fourier series of the form
\begin{equation}\label{Fourierseries}
\chi(t)=\frac{9}{5}Y+\sum\limits_{m=-\infty\atop{m\neq0}}^\infty g(m) e(mt),
\end{equation}
where
\begin{equation}\label{gmest}
|g(m)|\leq\min\bigg(\frac{1}{\pi|m|},\frac{1}{\pi |m|}
\bigg(\frac{r}{\pi |m|\Delta}\bigg)^r\bigg)
\end{equation}
and
\begin{equation}\label{chit}
\chi(t) =
  \begin{cases}
    1  \quad   \text{ if }  &\|t\|\leq Y-\Delta,\\
    0 \quad  \text{ if }  &\|t\|\geq Y,\\
     \text{between} &0 \, \text{ and }\, 1 \text{ for the other t }.
   \end{cases}
\end{equation}
\end{lemma}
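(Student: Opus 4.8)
The plan is to realize $\chi$ as a periodic convolution of the indicator of a symmetric interval with a smooth nonnegative mollifier of small support, following the classical smoothing construction of Vinogradov. Concretely, I would set $a=Y-\Delta/2$ and let $\mathbb{1}_a$ denote the $1$-periodization of the indicator of $[-a,a]$. For the mollifier, take $\rho$ to be the $r$-fold convolution of the normalized box $\frac{r}{\Delta}\,\mathbb{1}_{[-\Delta/(2r),\,\Delta/(2r)]}$ with itself, so that $\rho\geq0$, $\int_{\mathbb{R}}\rho=1$, $\rho$ is supported on $[-\Delta/2,\Delta/2]$, and $\rho$ is continuously differentiable as many times as desired (each additional convolved box raises the order of smoothness). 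Then define
\[
\chi(t)=(\mathbb{1}_a*\rho)(t)=\int_{-\Delta/2}^{\Delta/2}\mathbb{1}_a(t-u)\,\rho(u)\,du .
\]
By construction $\chi$ is $1$-periodic and inherits the smoothness of $\rho$.

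Next I would verify the three cases of \eqref{chit} directly from the support of $\rho$ and the fact that $\int\rho=1$. If $\|t\|\leq Y-\Delta=a-\Delta/2$, then for every $u\in[-\Delta/2,\Delta/2]$ the point $t-u$ lies in $[-a,a]$ modulo $1$, so $\mathbb{1}_a(t-u)=1$ and hence $\chi(t)=\int\rho=1$. If $\|t\|\geq Y=a+\Delta/2$, the whole support of $u\mapsto\rho(u)$ lands where $\mathbb{1}_a=0$, giving $\chi(t)=0$. For intermediate $t$, $\chi(t)$ is an average of the values $0$ and $1$ with weights $\rho(u)\,du$, hence lies in $[0,1]$.

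For the Fourier data I would invoke the convolution theorem, so that the coefficients multiply. The constant term is $\big(\int_0^1\mathbb{1}_a\big)\big(\int\rho\big)=2a=2Y-\Delta$, which equals $\tfrac{9}{5}Y$ after substituting $\Delta=Y/5$; this is exactly why the symmetric choice $a=Y-\Delta/2$ together with $\Delta=Y/5$ is forced. For $m\neq0$ the $m$-th coefficient of $\mathbb{1}_a$ is $\frac{\sin(2\pi m a)}{\pi m}$, of modulus at most $\frac{1}{\pi|m|}$, while the $m$-th coefficient of $\rho$ equals $\big(\frac{\sin(\pi m\Delta/r)}{\pi m\Delta/r}\big)^{r}$, of modulus at most $\min\!\big(1,(r/(\pi|m|\Delta))^{r}\big)$ since $|\sin|\leq1$. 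Multiplying these two bounds yields
\[
|g(m)|\leq\frac{1}{\pi|m|}\cdot\min\!\bigg(1,\Big(\frac{r}{\pi|m|\Delta}\Big)^{r}\bigg),
\]
which is precisely \eqref{gmest}.

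The only genuine work is the construction of the mollifier delivering the three required properties simultaneously; the cleanest route to the decay factor $(r/(\pi|m|\Delta))^{r}$ is the explicit product of sines above, which is equivalent to integrating by parts $r$ times (each step costing a factor $\tfrac{1}{\pi|m|}$ and gaining control from $\rho^{(r)}$, a bounded signed combination of shifted boxes of total mass $\asymp(r/\Delta)^{r}$). The one point I would watch is calibrating the prescribed smoothness order against the exponent $r$ appearing in \eqref{gmest}: this is arranged by choosing the number of convolved boxes appropriately, and since the later argument uses only finitely many derivatives together with the stated decay (with $r=[\log X]$), the construction can be tuned to provide both at once.
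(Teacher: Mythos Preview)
Your construction is exactly the classical Vinogradov smoothing that the paper invokes by citation (the paper's entire proof is ``See \cite{Karatsuba}, p.~14''), so your approach coincides with the intended one and the Fourier computations are correct. One minor caveat: the $r$-fold self-convolution of a box is only $C^{r-2}$, not $C^{r}$, but since the paper never uses the derivatives of $\chi$ and relies solely on the coefficient bound \eqref{gmest}, this discrepancy---which you already flag---is immaterial.
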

\begin{proof}
See (\cite{Karatsuba}, p. 14).
\end{proof}

\begin{lemma}\label{Buriev} Let $x,y\in\mathbb{R}$ and $H\geq3$.
Then the formula
\begin{equation*}
e(-x\{y\})=\sum\limits_{|h|\leq H}c_h(x)e(hy)+\mathcal{O}\left(\min\left(1, \frac{1}{H\|y\|}\right)\right)
\end{equation*}
holds. Here
\begin{equation*}
c_h(x)=\frac{1-e(-x)}{2\pi i(h+x)}\,.
\end{equation*}
\end{lemma}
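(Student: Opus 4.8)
The plan is to recognise the right-hand side as a symmetric partial Fourier sum and to estimate the tail through the single discontinuity. Since $\{y\}$ has period $1$, both sides are $1$-periodic in $y$, so it suffices to work on $y\in[0,1)$, where $g(y):=e(-x\{y\})=e(-xy)$. First I would compute the Fourier coefficients of $g$, namely
\[
\widehat{g}(h)=\int_0^1 e(-xy)\,e(-hy)\,dy=\frac{1-e(-(x+h))}{2\pi i(x+h)}=\frac{1-e(-x)}{2\pi i(h+x)}=c_h(x),
\]
where I used $e(-h)=1$ for integer $h$. Thus $\sum_{|h|\le H}c_h(x)e(hy)$ is exactly the partial sum $S_H(y)$ of the Fourier series of $g$, and the lemma reduces to the estimate $|S_H(y)-g(y)|\ll\min\big(1,(H\|y\|)^{-1}\big)$.

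The function $g$ is smooth on $(0,1)$ but suffers a jump $g(0^+)-g(1^-)=1-e(-x)$ across the integers, so its Fourier series cannot converge uniformly and the estimate must absorb this jump. I would isolate it by writing
\[
g(y)=-(1-e(-x))\,\psi(y)+s(y),\qquad \psi(y)=\{y\}-\tfrac12,
\]
where the sawtooth $\psi$ carries precisely the discontinuity of $g$, so that $s$ is continuous on the circle and piecewise smooth with at worst a corner at the integers. Hence $\widehat{s}(h)\ll|h|^{-2}$, and since the partial-sum operator is linear,
\[
S_H(y)-g(y)=\big(S_H^{\,s}(y)-s(y)\big)-(1-e(-x))\big(S_H^{\,\psi}(y)-\psi(y)\big).
\]

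For the smooth remainder I would bound the error by its tail, $|S_H^{\,s}(y)-s(y)|\le\sum_{|h|>H}|\widehat{s}(h)|\ll H^{-1}$, and observe that $H^{-1}\le\min\big(1,(H\|y\|)^{-1}\big)$ because $\|y\|\le\tfrac12$ and $H\ge3$. The only genuinely analytic input is the classical sawtooth estimate $|S_H^{\,\psi}(y)-\psi(y)|\ll\min\big(1,(H\|y\|)^{-1}\big)$: away from the integers, where $\|y\|\ge H^{-1}$, this follows from Abel summation applied to $\sum_{|h|>H}\frac{e(hy)}{2\pi i h}$, using that the partial sums of $\sum e(hy)$ are $\ll\|y\|^{-1}$ and that $|h|^{-1}$ decreases monotonically; near the integers one invokes the uniform boundedness of the conjugate partial sums $\sum_{n\le H}\frac{\sin 2\pi n y}{n}$ to obtain the $O(1)$ bound. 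Combining the two contributions and using $|1-e(-x)|\le2$ gives the claim.

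I expect the main obstacle to be exactly this last point, namely controlling the partial sums of the sawtooth uniformly through the jump (the Gibbs phenomenon), since this is where one must secure the sharp $O(1)$ bound rather than the lossy $O(\log H)$ that a crude use of the Dirichlet kernel would produce; everything else is routine Fourier computation and partial summation.
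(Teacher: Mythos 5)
Your proof is correct, but there is nothing in the paper to compare it against: the paper's entire ``proof'' of this lemma is a citation to Lemma 12 of Buriev's Russian-language thesis \cite{Buriev}, so you have supplied a self-contained argument where the paper has none. Your route is the natural (and standard) one: you identify $c_h(x)$ as the exact Fourier coefficients of $g(y)=e(-x\{y\})$ (the computation is right, using $e(-h)=1$ for integer $h$), split off the discontinuity via $g=-(1-e(-x))\psi+s$ with the sawtooth $\psi(y)=\{y\}-\tfrac12$ (your jump accounting checks out: both one-sided limits of $s$ at the integers equal $\tfrac12(1+e(-x))$), dispose of the smooth part by the absolute tail bound $\sum_{|h|>H}|\widehat{s}(h)|\ll H^{-1}\le\min\bigl(1,(H\|y\|)^{-1}\bigr)$, and reduce everything to the classical sawtooth estimate, for which your two-regime treatment (Abel summation with $\bigl|\sum_{H<h\le T}e(hy)\bigr|\ll\|y\|^{-1}$ when $\|y\|\ge H^{-1}$, and uniform boundedness of $\sum_{n\le H}\frac{\sin 2\pi ny}{n}$ through the jump) is exactly the right input; your closing remark correctly identifies the Gibbs phenomenon as the only nontrivial point. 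One can even confirm your corner heuristic in closed form: $\widehat{s}(h)=\frac{1-e(-x)}{2\pi i}\cdot\frac{-x}{h(h+x)}$ for $h\ne0$.

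One caveat you should make explicit: your step $\widehat{s}(h)\ll|h|^{-2}$, and hence the whole estimate, is uniform only for bounded $x$ (say $|x|\le1$, which is all the paper needs, since it applies the lemma with $x=\alpha\in[0,1]$). As literally stated, with $x\in\mathbb{R}$ arbitrary and an absolute implied constant, the lemma is false: take $x=10H$ and $y=\tfrac12$; the coefficients $c_h(x)$ peak near $h\approx-x$, outside the range $|h|\le H$, so $\bigl|\sum_{|h|\le H}c_h(x)e(hy)\bigr|\le\frac{2H+1}{9\pi H}\le\frac{1}{3\pi}$ while $|e(-x\{y\})|=1$, contradicting the claimed error $O(2/H)$. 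So either restrict to $|x|\le1$ or let the constant depend on $x$; with that proviso your proof is complete. (Pedantically, when $h+x=0$ one should read $c_h(x)$ as its limiting value $1$, which is also what your coefficient integral gives.)
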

\begin{proof}
See (\cite{Buriev}, Lemma 12).
\end{proof}

\section{Outline of the proof}
\indent

Consider the sum
\begin{equation}\label{Gamma}
\Gamma(X)= \sum\limits_{[p^c_1]+[p^c_2]+[p^c_3]=X^c
\atop{\|\sqrt{p_i}\|<Y,\,i=1,2,3}}\log p_1\log p_2\log p_3\,.
\end{equation}
The theorem will be proved if we show that $\Gamma(X)\rightarrow\infty$ as $X\rightarrow\infty$.

From \eqref{chit} and \eqref{Gamma}  we obtain
\begin{equation}\label{Gammaest1}
\Gamma(X)\geq \sum\limits_{[p^c_1]+[p^c_2]+[p^c_3]=X^c}  \prod_{k=1}^{3} \chi(\sqrt{p_k})\log p_k=\int\limits_0^1H^3(\alpha) e(-X^c\alpha)\, d\alpha,
\end{equation}
where
\begin{equation}\label{Halpha}
H(\alpha)=\sum\limits_{p\leq X} \chi(\sqrt{p})e(\alpha [p^c])\log p.
\end{equation}
By \eqref{Fourierseries} and  \eqref{Halpha} we get
\begin{equation}\label{Halphadecomp}
H(\alpha)=\frac{9}{5}YS(\alpha)+V(\alpha),
\end{equation}
where
\begin{align}
\label{Salpha}
&S(\alpha)=\sum\limits_{p\leq X}e(\alpha [p^c])\log p,\\
\label{Valpha}
&V(\alpha)=\sum\limits_{m=-\infty\atop{m\neq0}}^\infty g(m)
\sum\limits_{p\leq X}e(\alpha [p^c]+m\sqrt{p})\log p.
\end{align}
Bearing in mind \eqref{Gammaest1} and \eqref{Halphadecomp} we find
\begin{align}\label{Gamma1est2}
\Gamma(X)&\geq \int\limits_0^1\bigg(\frac{9}{5}YS(\alpha)+V(\alpha) \bigg)^3
e(-X^c\alpha)\,d\alpha\nonumber\\
&=  \bigg(\frac{9}{5}Y\bigg)^3I
+\mathcal{O}\Bigg(Y^2\int\limits_0^1 |S^2(\alpha) V(\alpha)| \,d\alpha  \Bigg)\nonumber\\
&+\mathcal{O}\Bigg(Y\int\limits_0^1 |S(\alpha) V^2(\alpha)| \,d\alpha  \Bigg)
+\mathcal{O}\Bigg(\int\limits_0^1 |V^3(\alpha)| \,d\alpha  \Bigg),
\end{align}
where
\begin{equation*}
I=\int\limits_0^1S^3(\alpha) e(-X^c\alpha)\, d\alpha.
\end{equation*}
According to the asymptotic formula \eqref{RNasymptoticformula}
\begin{equation}\label{Intlowerbound}
I\gg X^{3-c}.
\end{equation}
We have
\begin{equation}\label{Firstint}
\int\limits_0^1 |S^2(\alpha) V(\alpha)| \,d\alpha
\ll\max\limits_{0\leq\alpha\leq 1} |V(\alpha)|\int\limits_0^1|S(\alpha)|^2\,d\alpha.
\end{equation}
Arguing as in (\cite{Tolev}, Lemma 7) for the sum $S(\alpha)$ denoted by \eqref{Salpha}
we obtain
\begin{equation}\label{IntSalphaest}
\int\limits_0^1 |S(\alpha)|^2\,d\alpha\ll X^{1+\varepsilon}.
\end{equation}
Using Cauchy's inequality we get
\begin{equation}\label{Secondint}
\int\limits_0^1 |S(\alpha) V^2(\alpha)| \,d\alpha
\ll\max\limits_{0\leq\alpha\leq 1} |V(\alpha)|
\left(\int\limits_0^1 |S(\alpha)|^2\,d\alpha\right)^{1/2}
\left(\int\limits_0^1 |V(\alpha)|^2\,d\alpha\right)^{1/2}.
\end{equation}
Proceeding as in (\cite{Dimitrov}, Lemma 6) for the sum $V(\alpha)$ denoted by \eqref{Valpha}
we conclude
\begin{equation}\label{IntValphaest}
\int\limits_0^1 |V(\alpha)|^2\,d\alpha\ll X^{1+\varepsilon}.
\end{equation}
Finally
\begin{equation}\label{Thirdint}
\int\limits_0^1 |V(\alpha)|^3\,d\alpha
\ll\max\limits_{0\leq\alpha\leq 1} |V(\alpha)|\int\limits_0^1 |V(\alpha)|^2\,d\alpha.
\end{equation}
In order to complete the proof of the theorem it remains to
find the upper bound in the interval $[0, 1]$ for the sum $V(\alpha)$ denoted by \eqref{Valpha}.

\section{Upper bound of $V(\alpha)$}
\indent

\begin{lemma}\label{ValphaUpperbound}
For the sum $V(\alpha)$ denoted by \eqref{Valpha}  the upper bound
\begin{align}\label{Valphaest}
\max\limits_{0\leq\alpha\leq 1} |V(\alpha)|\ll
&\Big( M^{1/2}X^{7/12} + M^{1/6} X^{3/4}+X^{11/12}+X^{\frac{2c+31}{34}}\nonumber\\
&+M^{1/4}X^{\frac{69-12c}{68}}+M^{1/12}X^{\frac{131-8c}{136}}
+X^{\frac{32c+3}{68}}\Big)X^\varepsilon
\end{align}
holds.
\end{lemma}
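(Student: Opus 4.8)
The plan is to bound $V(\alpha)$ uniformly in $\alpha\in[0,1]$ by first exploiting the rapid decay of the coefficients $g(m)$, then linearizing the floor function, and finally estimating the resulting exponential sums over primes by Vaughan's identity combined with van der Corput's method, keeping track throughout of the dependence on $m$. I would begin by truncating the series over $m$: by \eqref{gmest}, for $|m|>M=\Delta^{-1}r$ the factor $\big(r/(\pi|m|\Delta)\big)^r$ forces super-exponential decay, so the tail is smaller than any power of $X$. Hence it suffices to estimate
\[
V(\alpha)=\sum_{1\le|m|\le M}g(m)\,W(\alpha,m)+O(1),\qquad W(\alpha,m)=\sum_{p\le X}e\big(\alpha[p^c]+m\sqrt p\big)\log p .
\]
Since $\sum_{1\le|m|\le M}|g(m)|\ll\sum_{m\le M}1/m\ll X^{\varepsilon}$, everything reduces to a bound for $W(\alpha,m)$ with its $m$-dependence made explicit: an estimate $|W(\alpha,m)|\ll m^{a}X^{b+\varepsilon}$ produces, after summation, the contribution $M^{a}X^{b+\varepsilon}$. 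This is precisely why the powers $M^{1/2},M^{1/6},M^{1/4},M^{1/12}$ occur in \eqref{Valphaest}.

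Next I would remove the fractional part. Writing $[p^c]=p^c-\{p^c\}$ and applying Lemma~\ref{Buriev} with $x=\alpha$, $y=p^c$ gives
\[
W(\alpha,m)=\sum_{|h|\le H}c_h(\alpha)\sum_{p\le X}e\big((\alpha+h)p^c+m\sqrt p\big)\log p+O\Big(\sum_{p\le X}\min\big(1,\tfrac{1}{H\|p^c\|}\big)\log p\Big),
\]
where $H$ is chosen as a suitable power of $X$. Since $|c_h(\alpha)|\ll(1+|h|)^{-1}$ we have $\sum_{|h|\le H}|c_h(\alpha)|\ll X^{\varepsilon}$, so modulo the error term the task becomes bounding the prime exponential sum
\[
T(\beta,m)=\sum_{p\le X}e\big(\beta p^c+m\sqrt p\big)\log p,\qquad \beta=\alpha+h,
\]
uniformly for $|h|\le H$. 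The error $\sum_{n\le X}\min\big(1,(H\|n^c\|)^{-1}\big)$ is handled by a further Fourier expansion of the sawtooth together with a van der Corput bound for $\sum_n e(kn^c)$; choosing $H$ appropriately keeps it below the terms in \eqref{Valphaest}.

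Finally I would attack $T(\beta,m)$ itself. Vaughan's identity decomposes it into Type~I sums $\sum_{d}\sum_{n}e\big(\beta(dn)^c+m\sqrt{dn}\big)$ and bilinear Type~II sums. The phase $f(n)=\beta n^c+m\sqrt n$ satisfies $f''(n)\asymp|\beta|(c-1)X^{c-2}+|m|X^{-3/2}$, so for the dominant small-$\beta$ part (in particular $h=0$, $\beta=\alpha$) the curvature is governed by the term $|m|X^{-3/2}$ arising from $m\sqrt n$, while for large $h$ the $\beta p^c$ term takes over. I would then apply the successive $A$- and $B$-processes of van der Corput (equivalently a well-chosen exponent pair) to each piece and optimize the splitting length; the denominators $34,68,136=2\cdot17,4\cdot17,8\cdot17$ in \eqref{Valphaest} reflect the particular exponent pair reached after three differencing steps. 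Summing the resulting estimates over $h$ (contributing only $X^{\varepsilon}$ through the weights $c_h(\alpha)$) and over $m$ with the weights $g(m)$ (producing the powers of $M$) yields \eqref{Valphaest}.

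The main obstacle will be the Type~II estimates: one must control a two-dimensional exponential sum whose phase mixes the $\beta p^c$ and $m\sqrt p$ contributions, choose the differencing length and exponent pair so as to balance all terms simultaneously, and do so uniformly in the full ranges $|h|\le H$ and $1\le|m|\le M$, so that the final summation reproduces exactly the exponents displayed in \eqref{Valphaest}.
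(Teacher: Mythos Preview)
Your outline matches the paper's proof essentially step for step: truncate the $m$-series at $M$ using the decay in \eqref{gmest}, apply Lemma~\ref{Buriev} to remove $\{p^c\}$, split into a main piece $V_1(\alpha)$ (the double sum over $|h|\le H$ and $|m|\le M$ of prime exponential sums with phase $(h+\alpha)n^c+m\sqrt n$) and an error $V_2=\sum_{n\le X}\min(1,(H\|n^c\|)^{-1})$, and finally choose $H=X^{(3-2c)/34}$. The paper does not carry out the Vaughan/van~der~Corput analysis itself but quotes the bound for $V_1$ from \cite{Dimitrov} and the bound for $V_2$ from \cite{Cai}; your sketch is a fair description of what those references do.

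One small correction: your claim that the sum over $h$ ``contributes only $X^{\varepsilon}$ through the weights $c_h(\alpha)$'' is not accurate. The inner exponential sum does depend on $|\beta|=|h+\alpha|$, and after summation over $h$ the paper's intermediate estimate \eqref{V1alphaest} contains genuine powers of $H$ (both $H^{1/16}$ and $H^{-3/16},H^{-1/16}$). It is precisely the balancing of these $H$-powers in $V_1$ against $H^{-1}X+H^{1/2}X^{c/2}$ in $V_2$ that forces the choice $H=X^{(3-2c)/34}$ and produces the denominators $34,68,136$ in \eqref{Valphaest}; they do not arise solely from the exponent pair used on the $m\sqrt n$ phase. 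Keep this $h$-dependence explicit when you write out the details.
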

\begin{proof}
Let $0\leq\alpha\leq 1$.
Denote
\begin{equation}\label{Ualpha}
U(\alpha, m)=\sum\limits_{p\leq X} e(\alpha [p^c]+m\sqrt{p})\log p.
\end{equation}
From  \eqref{r}, \eqref{Delta}, \eqref{M}, \eqref{gmest},
 \eqref{Valpha} and \eqref{Ualpha} it follows
\begin{align}\label{Valphaest1}
|V(\alpha)|&\ll\sum_{0<|m|\leq M}\frac{1}{|m|}|U(\alpha, m)|
+X\sum_{|m|>M}|g(m)|\nonumber\\
&\ll\sum_{0<|m|\leq M}\frac{1}{|m|}|U(\alpha, m)| +\bigg(\frac{r}{\pi M\Delta}\bigg)^rX\nonumber\\
&\ll \sum_{0<|m|\leq M}\frac{1}{|m|}|U(\alpha, m)|+1.
\end{align}
By \eqref{Ualpha} and Lemma \ref{Buriev} with $x=\alpha$ and $y=n^c$ we obtain
\begin{align}\label{Ualphaest1}
U(\alpha, m)&=\sum\limits_{n\leq X}\Lambda(n)e(\alpha n^c+m\sqrt{n})e(-\alpha\{n^c\})+\mathcal{O}( X^{1/2})\nonumber\\
&=\sum\limits_{|h|\leq H}c_h(\alpha)\sum\limits_{n\leq X}\Lambda(n)e((h+\alpha)n^c+m\sqrt{n})\nonumber\\
&+\mathcal{O}\left((\log X)\sum\limits_{n\leq X}\min\left(1, \frac{1}{H\|n^c\|}\right)\right).
\end{align}
Now \eqref{Valphaest1} and \eqref{Ualphaest1} imply
\begin{equation}\label{Valphaest2}
|V(\alpha)|\ll V_1(\alpha)+X^\varepsilon V_2,
\end{equation}
where
\begin{align*}
&V_1(\alpha)= \sum_{0<|m|\leq M}\frac{1}{|m|}
\bigg|\sum\limits_{|h|\leq H}c_h(\alpha)
\sum\limits_{n\leq X}\Lambda(n)e((h+\alpha)n^c+m\sqrt{n})\bigg|,\\
&V_2=\sum\limits_{n\leq X}\min\left(1, \frac{1}{H\|n^c\|}\right).
\end{align*}
Working similar to \cite{Dimitrov} we get
\begin{align}\label{V1alphaest}
\max\limits_{0\leq\alpha\leq 1} |V_1(\alpha)|\ll
&\Big( M^{1/2}X^{7/12} + M^{1/6} X^{3/4}+X^{11/12}
+H^{1/16}X^{\frac{2c+29}{32}}\nonumber\\
&+H^{-3/16}M^{1/4}X^{\frac{33-6c}{32}}
+H^{-1/16}M^{1/12}X^{\frac{31-2c}{32}}\Big)X^\varepsilon.
\end{align}
Arguing as in Cai \cite{Cai} we find
\begin{equation}\label{V2est}
 |V_1|\ll (H^{-1}X+H^{1/2}X^{c/2})X^\varepsilon.
\end{equation}
Summarizing \eqref{Valphaest2} --  \eqref{V2est}
and choosing
\begin{equation*}
H=X^{\frac{3-2c}{34}}
\end{equation*}
we obtain the estimation \eqref{Valphaest}.
\end{proof}

\section{Proof of the Theorem}
\indent

Using \eqref{Gamma1est2},  \eqref{Firstint}, \eqref{IntSalphaest},  \eqref{Secondint},
\eqref{IntValphaest},  \eqref{Thirdint}  and  Lemma \ref{ValphaUpperbound}
we obtain
\begin{align}\label{Gammaest3}
\Gamma(X)\geq\bigg(\frac{9}{5}Y\bigg)^3I  &+\mathcal{O} \Big( \big(M^{1/2}X^{19/12}
+ M^{1/6} X^{7/4}+X^{23/12}+X^{\frac{2c+65}{34}}\nonumber\\
&+M^{1/4}X^{\frac{137-12c}{68}}+M^{1/12}X^{\frac{267-8c}{136}}
+X^{\frac{32c+71}{68}}\big)X^\varepsilon\Big).
\end{align}
From  \eqref{Y}, \eqref{Delta}, \eqref{M}, \eqref{Intlowerbound}, \eqref{Gammaest3}
and  choosing   $\varepsilon<\delta$  we find
\begin{equation}\label{Gammaest4}
\Gamma(X)\gg Y^3X^{3-c}.
\end{equation}
Bearing in mind \eqref{Y} and \eqref{Gammaest4}
we establish that $\Gamma(X)\rightarrow\infty$ as $X\rightarrow\infty$.

The proof of the Theorem is complete.

\vskip18pt
\footnotesize
\begin{flushleft}
S. I. Dimitrov\\
Faculty of Applied Mathematics and Informatics\\
Technical University of Sofia \\
8, St.Kliment Ohridski Blvd. \\
1756 Sofia, BULGARIA\\
e-mail: sdimitrov@tu-sofia.bg\\
\end{flushleft}

\end{document}